\journal{Discrete Optimization}
\newcommand{\R}{\mathbb{R}}
\newcommand{\N}{\mathbb{N}}
\newcommand{\Z}{\mathbb{Z}}
\newcommand{\proj}{\operatorname{proj}}
\newcommand{\cl}{\operatorname{cl}}
\newcommand{\conv}{\operatorname{conv}}
\newcommand{\cone}{\operatorname{cone}}
\newcommand{\rec}{\operatorname{rec}}
\newcommand{\supp}{\operatorname{supp}}
\newcommand{\lin}{\operatorname{lin}}
\newcommand{\st}{\text{s.t.}}
\newcommand{\eg}{\text{e.g.}}
\newcommand{\I}{\mathscr{I}}
\newcommand{\F}{\mathscr{F}}
\newcommand{\transp}{\mathsf T}
\newtheorem{theorem}{Theorem}
\newtheorem{definition}{Definition}
\newtheorem{lemma}{Lemma}
\newtheorem{proposition}{Proposition}
\newtheorem{observation}{Observation}
\newtheorem{corollary}{Corollary}
\newtheorem{example}{Example}
\newtheorem*{rep@theorem}{\rep@title}
\newcommand{\newreptheorem}[2]{%
\newenvironment{rep#1}[1]{%
 \def\rep@title{#2 \ref{##1}}%
 \begin{rep@theorem}}%
 {\end{rep@theorem}}}
\newcounter{claim} 
\newenvironment{claim}[1][]
{\refstepcounter{claim} \begin{trivlist} \item[] {\bf Claim~\theclaim.}\space#1 \itshape}
{\end{trivlist}}
\newenvironment{cpf}
{\begin{trivlist} \item[] {\em Proof of claim. }}
{$\hfill\diamond$ \end{trivlist}}
\begin{document}

\begin{frontmatter}



\title{The $k$-aggregation Closure for Covering Sets}


\author{Haoran Zhu}
\ead{hzhu94@wisc.edu}


\address{Department of Industrial and Systems Engineering, University of Wisconsin-Madison, Madison, WI, USA}

\begin{abstract}
In this paper, we will answer one of the questions proposed by Bodur, Del~Pia, Dey, Molinaro and Pokutta in 2017. Specifically, we show that the $k$-aggregation closure of a covering set is a polyhedron. The proof technique is based on an equivalent condition for the closure of any particular family of cutting-planes to be polyhedral, from the perspective of convex geometry. We believe that this technique can be applied to tackle other polyhedrality problems in the future and may be of independent interest.


\end{abstract}

%

\begin{keyword}



Closure \sep Polyhedral \sep Cutting-planes \sep Aggregation Closure

\end{keyword}

\end{frontmatter}



\section{Introduction}

Cutting-plane technology has been one of the main pillars in the edifice of state-of-the-art mixed integer programming (MIP) solvers for a few decades, and numerous types of cutting-planes have been extensively studied in the literature, both theoretically and computationally (see, for example, \cite{conforti2010polyhedral,cornuejols2008valid,johnson2000progress,MR1922341,richard2010group}).

In general, a cutting-plane (or simply, a \emph{cut}) for a polyhedron $P$ is an inequality that separates some fractional point of $P$ from all integer points in $P$, and when added to the polyhedron itself, typically yields a stronger relaxation for its \emph{integer hull} $\conv(P \cap \Z^n)$, compared with its natural linear relaxation. Here $n$ is the dimension of the ambient space.
Normally speaking, given one specific cut generating method, there are infinitely many possible cutting-planes that can be obtained, e.g., Chv\'atal-Gomory cuts \cite{MR313080}, split cuts \cite{cook1990chvatal} etc. 
For a family of cutting-planes, the \emph{(cutting-plane) closure} is the intersection of all the half-spaces given by the cutting-planes from that family \cite{MR2969261}. 
Closure is of both theoretical and practical interest, since the polyhedrality of a closure essentially indicates that there exists only finitely many ``strongest'' cutting-planes from this family, and the integrality gap obtained from a closure can be used to measure the ``strength'' of this particular cutting-plane family.
We give its formal definition in the following:

\begin{definition}
\label{def: closure}
For a family of half-spaces $\{H^i\}_{i \in I}$ in $\mathbb{R}^n$, each of the half-space $H^i$ is given by an inequality $(\alpha^i)^\transp  x \leq \beta_i$, then the \textbf{closure} for that family of half-spaces is:
$$\mathscr{I}: = \bigcap_{i \in I} \ H^i = \bigcap_{i\in I} \{x \in \mathbb{R}^n \mid (\alpha^i)^\transp  x \leq \beta_i\}.$$
\end{definition}

For the ease of notation, we will normally use an upper index to denote vector, and a lower index to denote number, $\eg, \alpha^i \in \R^n$, $\beta_i \in \R$.

For any $\Omega \subseteq \R^{n+1}$, we use ``a family of cuts given by $\Omega$'' to refer to the family of cuts given by $\alpha^\transp x \leq \beta,$ for any $(\alpha, \beta) \in \Omega, \alpha \in \R^n, \beta \in \R$. 
We also denote the corresponding closure as:
\begin{align}
\label{def: closure2}
\mathscr{I}(\Omega): = \bigcap_{(\alpha, \beta) \in \Omega} \{x \in \R^n \mid \alpha^\transp x \leq \beta\}.
\end{align}
Clearly this closure is convex and closed, but whenever $\Omega$ is an infinite set, this closure is obtained by intersecting infinitely many half-spaces. One natural question then arises: When $\Omega$ is given by some specific cut-generating procedure, is $\mathscr{I}(\Omega)$ polyhedral? As an equivalent condition for $\mathscr{I}(\Omega)$ to be polyhedral, we have the following characterization theorem.

\begin{theorem}
\label{theo: equiv}
$\Omega$ is a set in $\R^{n+1}$ that contains $(0, \ldots, 0, 1)$, and
let $L = \lin(\cl \cone(\Omega))$. Then $\I(\Omega)$ is polyhedral iff $\cl \cone(\proj_{L^\perp} \Omega)$ has finitely many different extreme rays. 
\end{theorem}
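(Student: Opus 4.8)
The plan is to replace $\Omega$ by the closed convex cone $C:=\cl\cone(\Omega)$, note that $\I(\Omega)=\I(C)$, and then prove the sharper statement that under the hypotheses $C$ equals the \emph{cone of valid inequalities}
$$V\big(\I(\Omega)\big):=\{(\alpha,\beta)\in\R^{n+1}\mid \alpha x\le\beta\ \text{for all }x\in\I(\Omega)\};$$
once this is in hand both implications become short. The equality $\I(\Omega)=\I(C)$ holds because a positive multiple of a valid inequality, a finite sum of valid inequalities, and a pointwise limit of valid inequalities are again valid, so intersecting the halfspaces coming from $\Omega$, from $\cone(\Omega)$, or from $\cl\cone(\Omega)$ yields the same set. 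I would also record at the start that full-dimensionality forces $C$ to be \emph{pointed}: if a nonzero $(w,w_{n+1})$ lay in $C\cap(-C)$, then every point of $\I(\Omega)$ would satisfy $wx=w_{n+1}$, which is impossible when $w\ne 0$ (it confines $\I(\Omega)$ to a hyperplane) and also when $w=0$ (then $\I(\Omega)$ is empty). The hypothesis $(0,\dots,0,1)\in\Omega$ will be used to control the sign of a separating functional.

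The heart of the argument is the identity $C=V(\I(\Omega))$. The inclusion $C\subseteq V(\I(\Omega))$ is immediate, since every element of $\cone(\Omega)$ is a nonnegative combination of members of $\Omega$, hence valid for $\I(\Omega)$, and $V(\I(\Omega))$ is closed. For the reverse inclusion I argue by contradiction: if $(\alpha_0,\beta_0)$ is valid for $\I(\Omega)$ but not in the closed convex cone $C$, separate it from $C$ by a vector $(y,y_0)$ with $\langle(y,y_0),\omega\rangle\le 0$ for all $\omega\in C$ and $\langle(y,y_0),(\alpha_0,\beta_0)\rangle>0$. Taking $\omega=(0,\dots,0,1)\in C$ gives $y_0\le 0$. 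If $y_0<0$, dividing $\langle y,\alpha\rangle+y_0\beta\le 0$ by $-y_0$ shows $\hat x:=y/(-y_0)$ satisfies $\alpha\hat x\le\beta$ for every $(\alpha,\beta)\in C$, i.e. $\hat x\in\I(\Omega)$, while the same computation at $(\alpha_0,\beta_0)$ gives $\alpha_0\hat x>\beta_0$, contradicting validity. If $y_0=0$, then $\langle y,\alpha\rangle\le 0$ for all $(\alpha,\beta)\in C$, so for any $\hat x\in\I(\Omega)$ the ray $\hat x+ty$ $(t\ge 0)$ stays in $\I(\Omega)$, i.e. $y$ is a recession direction; but then $\alpha_0(\hat x+ty)\le\beta_0$ for all $t\ge 0$ forces $\langle y,\alpha_0\rangle\le 0$, again a contradiction. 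Hence $C=V(\I(\Omega))$.

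With the identity in place, the forward direction is routine: if $\I(\Omega)=\{x\mid Ax\le b\}$ is polyhedral (and nonempty, by full-dimensionality), LP duality gives $V(\I(\Omega))=\{(A^{T}\lambda,\,b^{T}\lambda+s)\mid \lambda\ge 0,\ s\ge 0\}$, a linear image of a polyhedral cone and therefore polyhedral, hence with finitely many extreme rays; so $C$ has finitely many extreme rays. For the converse, suppose $C$ has finitely many extreme rays. Being a pointed closed convex cone, $C$ is the conic hull of generators $r^{1},\dots,r^{m}$ of its extreme rays: slicing $C$ by a hyperplane that strictly separates $C\setminus\{0\}$ from the origin (available since $C$ is pointed) gives a compact convex cross-section whose extreme points are exactly these $r^{j}$, and having finitely many extreme points it is a polytope, so $C=\cone\{r^{1},\dots,r^{m}\}$. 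Then $\I(\Omega)=\I(C)=\bigcap_{j=1}^{m}\{x\mid (r^{j})_{1:n}x\le (r^{j})_{n+1}\}$ is a polyhedron.

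I expect the main obstacle to be the reverse inclusion in $C=V(\I(\Omega))$ — the assertion that the closed conic hull of $\Omega$ already contains \emph{every} valid inequality of $\I(\Omega)$. The hypothesis $(0,\dots,0,1)\in\Omega$ is precisely what pins down $y_0\le 0$ in the separation step, and the genuinely delicate case is the degenerate one $y_0=0$, where the separating direction has to be reinterpreted as a recession direction of $\I(\Omega)$. A second, more standard point that still needs care is that a pointed closed convex cone with finitely many extreme rays is finitely generated, which relies on passing to a compact cross-section rather than being a formality. If one wishes to avoid the cone identity, the forward direction can alternatively be handled by a direct perturbation argument: for $x^{\ast}$ in the relative interior of a facet $F_i$ of $\I(\Omega)$, the points $x^{\ast}+\delta a^{i}$ leave $\I(\Omega)$ for small $\delta>0$, and the normalized separating inequalities from $\Omega$ converge, by closedness of $C$, to a valid inequality tight on all of $F_i$, which complementary slackness (using full-dimensionality and irredundancy of the facet description) identifies as a positive multiple of the facet inequality; but the cone-identity route seems cleaner.
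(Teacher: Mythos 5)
Your proof is correct and follows essentially the same route as the paper: the central step in both is the identification of $\cl\cone(\Omega)$ with the cone of all valid inequalities for $\I(\Omega)$ (the paper obtains this as Proposition~\ref{prop: valid_ineq_for_closure} by citing an extended Farkas' lemma, whereas you re-derive it from scratch via a separation argument, correctly using $(0,\dots,0,1)\in\Omega$ to pin down the sign of the last coordinate of the separating vector and handling the degenerate $y_0=0$ case through recession directions), combined with the observation that full-dimensionality forces pointedness (Corollary~\ref{coro: pointed_full_dimensional}) and the fact that a pointed closed convex cone is polyhedral iff it has finitely many extreme rays (Lemma~\ref{lem: easy_cone_extreme_ray}, which you prove inline via a compact cross-section). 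The only minor deviation is in the forward direction, where you invoke LP duality to exhibit the cone of valid inequalities of a polyhedron directly, while the paper instead argues that $\cl\cone(\Omega)=\cl\cone(\Omega')$ for a finite $\Omega'$; both work.
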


Here for any set $S \subseteq \R^n, \lin(S): = S \cap (-S)$ denotes the \emph{linearity space} of $S$, which is the largest linear subspace contained in $S$. For a linear subspace $L$, we denote by $L^\perp$ the orthogonal complement of $L$. $\cone(S)$ denotes the \emph{conical hull} of $S$, which is defined as the set of all conical combination of finitely many points in $S: \cone(S) = \{\sum_{i=1}^t \alpha_i s^i \mid s_i \in S, \alpha_i \geq 0, i \in [t] \text{ for some } t \in \N\}$. We use $\cl(S)$ to denote the smallest closed set containing $S$, which is also called \emph{closure} in topology. To avoid confusion, we will only use $\cl(S)$ to refer to the topological closure, and whenever we say \emph{closure} in this paper, it refers to the cutting-plane closure in Definition~\ref{def: closure}, or equation~\eqref{def: closure2}. 

People have been looking at such polyhedrality for various closures in literature, see, $\eg$, \cite{MR2676765,MR2969261,dash2020generalized,MR3670262,MR4207341,vielma2007constructive}. In this paper, using the characteristical Theorem~\ref{theo: equiv}, we are going to add one more closure into the list: the $k$-aggregation closure of a covering set is polyhedral. 

\subsection{$k$-aggregation Closure}
A \emph{packing polyhedron} is of the form $\{x \in \R^n_+ \mid  Mx \leq d\}$ where $M \in \R^{m \times n}_+, d \in \R^m_+$ for some $m, n \in \N$. 
Analogously, a \emph{covering polyhedron} is of the form $\{x \in \R^n_+ \mid  Mx \leq d\}$ where $(M,d) \in (\R^{m \times n}_-, \R^{m}_-)$.
In a paper by Bodur et~al. \cite{bodur2017aggregation}, they also defined a \emph{covering set} to be the non-polyhedral counterpart of covering polyhedron: it is a set of the form $\{x \in \R^n_+ \mid M_i x \leq d_i \ \forall i \in I\}$ where each $(M_i, d_i) \in (\R^{ n}_-, \R_-)$ and $I$ is an arbitrary set (potentially infinite).
The non-polyhedral \emph{packing set} can be defined analogously.

The authors of \cite{bodur2017aggregation} also presented a new class of closure, namely the \emph{k-aggregation closure}, for some $k \in \N$. For a  polyhedron $Q = \{x \in \R^n_+ \mid Mx \le d\}$, its \emph{k-aggregation closure} is defined as:
\begin{equation}
\label{eq: defn_k_aggregation}
\mathcal{A}_k(Q): = \bigcap_{\lambda^1, \ldots, \lambda^k \in \R^{m}_+}\conv(\{x \in \N^n \mid (\lambda^j)^\transp M x \leq (\lambda^j)^\transp d \ \forall j \in [k]\}).
\end{equation}
This can easily be seen as a tighter relaxation than the classic Chvat\'al-Gomory closure for the integer hull of $Q$.
For non-polyhedral covering set however, since here the number of inequalities within $Q$ is infinite, then the multiplier $\lambda^j$ in \eqref{eq: defn_k_aggregation} will be an infinite-dimensional vector, which can cause ambiguity from the ill-defined inner product $(\lambda^j)^\transp  M$ of infinite-dimensional vectors, since the infinite series summation of product terms might diverge.
To deal with this infinite-dimensional issue, Del Pia et al. \cite{MR4207341} introduced a slightly different and more general definition for the $k$-aggregation closure, instead of using those multipliers:
for a convex set $S$ in $\R^n_+$, denote by 
\begin{equation}
\label{eq: Lambda_S}
\Lambda(S): = \{f \in \R^n \mid \sup\{f^\transp  x \mid x \in S\} < \infty \}.
\end{equation}
 Then the $k$-aggregation closure of $S$ is
\begin{equation}
\label{eq: defn_generalized_k_agg}
\tilde{\mathcal{A}_k}(S): = \bigcap_{f^1, \ldots, f^k \in \Lambda(S)} \conv(\{x \in \N^n \mid (f^j)^\transp x \leq \sup\{(f^j)^\transp s \mid s \in S\}, \ \forall j \in [k]\}).
\end{equation}
In other words, $\tilde{\mathcal{A}_k}(S)$ is defined as the intersection of all integer hulls of the polyhedra composed by $k$ valid inequalities of $S$.
It was observed in \cite{MR4207341} that for any packing polyhedron $P$ and $k \geq 1$, $\tilde{\mathcal{A}_k}(P)$ coincides with $\mathcal{A}_k(P)$. Actually follows from almost the same argument, this also holds for a covering polyhedron. So the $k$-aggregation closure defined in \eqref{eq: defn_generalized_k_agg} extends the original definition \eqref{eq: defn_k_aggregation} of \cite{bodur2017aggregation}. Therefore, throughout this paper, we will simply denote the $k$-aggregation closure in \eqref{eq: defn_generalized_k_agg} by $\mathcal{A}_k$.

Given that the $k$-aggregation closure in \eqref{eq: defn_k_aggregation} is obtained by intersecting infinitely many polyhedra, the polyhedrality of this closure is unclear. The authors of \cite{bodur2017aggregation} showed that for packing and covering polyhedron, if the matrix $M$ is fully dense, then the closure is still polyhedral. For the general case however, they proposed this question as an open problem.

In the paper of Del Pia et al. \cite{MR4207341}, by using the generalized definition in \eqref{eq: defn_generalized_k_agg}, they showed that the polyhedrality of the $k$-aggregation closure for the \textbf{packing set} can be automatically derived from the structural study of some discrete object called \emph{integer packing sets},
however such structural property cannot be carried over to the covering case.
An independent and almost simultaneous proof was given by Pashkovich et al. in \cite{pashkovich2021aggregation}, where they proved that the $k$-aggregation closure of both packing and covering polyhedron are still polyhedral.
In this paper, we are going to show the following main theorem: for any \textbf{covering set}, the corresponding $k$-aggregation closure as defined in \eqref{eq: defn_generalized_k_agg} is still polyhedral. Therefore, together with \cite{MR4207341}, we are able to augment the results of \cite{pashkovich2021aggregation} to the non-polyhedral case.

\begin{theorem}
\label{theo: aggregation_sign_pattern}
Let $Q$ be a covering set, then $\mathcal{A}_k(Q)$ is polyhedral, for any $k \in \N$. 
\end{theorem}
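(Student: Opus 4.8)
The plan is to verify the hypotheses of Theorem~\ref{theo: equiv} for $\Omega_A$ as defined in \eqref{eq: Omega_k_aggregation_covering}, using the two announced Propositions~\ref{prop: first_step} and \ref{prop: second_step} as the technical core. First, $\Omega_A$ contains $(0,\ldots,0,1)$ by construction, and $\I(\Omega_A)=\mathcal A_k(Q)$ as noted in the excerpt. Second, since $Q$ is a covering set, $\R^n_+$ is contained in the recession cone of every member of $\mathcal C(Q)$, hence of every integer hull $\conv(C\cap\Z^n)$, and therefore of $\mathcal A_k(Q)$; consequently $\mathcal A_k(Q)$ is full-dimensional (it is nonempty whenever $Q$ is, and carries a full-dimensional recession cone). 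With full-dimensionality in hand, Theorem~\ref{theo: equiv} tells us that $\mathcal A_k(Q)$ is polyhedral if and only if $\cl\cone(\Omega_A)$ has finitely many distinct extreme rays. Propositions~\ref{prop: first_step} and \ref{prop: second_step} together give exactly this: every extreme ray of $\cl\cone(\Omega_A)$ already lies in $\Omega_A$ (so no extreme rays are ``created in the limit''), and the set of those $(\alpha,\beta)\in\Omega_A$ that are extreme rays is finite. Hence $\cl\cone(\Omega_A)$ has finitely many extreme rays, and Theorem~\ref{theo: equiv} yields polyhedrality.

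The real work is of course hidden inside the two propositions, and I would expect Proposition~\ref{prop: second_step} to be the main obstacle. Proposition~\ref{prop: first_step} should follow from Lemma~\ref{lem: main}: an extreme ray $r$ of $\cl\cone(\Omega_A)$ either lies in $\Omega_A$ (done), or is a conical limit of distinct points $r^i\in\Omega_A$; one must rule out the latter by exploiting the sign structure $\F_C\subseteq\R^{n+1}_-$ and the presence of the coordinate cuts $(-e^j,0)$ and the normalization cut $(0,\ldots,0,1)$, showing that any limiting ray would be dominated by, or equal to, one of these already-present rays — so the limit case degenerates and $r\in\Omega_A$ anyway. For Proposition~\ref{prop: second_step} the finiteness must come from Dickson's lemma: after stripping off the $n+1$ ``trivial'' rays $\{(-e^j,0)\}\cup\{(0,\ldots,0,1)\}$, one has to argue that the facet-defining inequalities $(\alpha,\beta)\in\bigcup_{C}\F_C$ which are extreme rays of the big cone form, up to scaling, an antichain (or a set whose minimal elements are finitely many) in some $\N^{n+1}$-indexed partial order, so that Lemma~\ref{lem: Dickson_lemma} bounds their number. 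Concretely, each covering polyhedron $C\in\mathcal C(Q)$ has integer hull $\conv(C\cap\Z^n)$ a covering polyhedron by Proposition~\ref{prop: integerl_hull_covering}, and its facets have nonnegative normal coefficients (negated into $\R_-$ under our sign convention); the candidate extreme rays should correspond to ``minimal'' such facet inequalities, and one must show two distinct minimal ones cannot be comparable coordinatewise, else the larger is conically redundant and hence not extreme.

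In carrying this out I would proceed in the following order: (1) record that $\mathcal A_k(Q)$ is full-dimensional and nonempty and that $(0,\ldots,0,1)\in\Omega_A$; (2) state and prove Proposition~\ref{prop: first_step} via Lemma~\ref{lem: main}, handling the conical-limit case by a domination argument based on the sign pattern; (3) state and prove Proposition~\ref{prop: second_step} by reducing the extreme rays to an antichain and invoking Dickson's lemma; (4) apply Theorem~\ref{theo: equiv} to conclude. The delicate point throughout is the interplay between taking integer hulls (which needs Dickson to stay polyhedral and covering) and taking the closed conical hull of the \emph{union} over all infinitely many $C\in\mathcal C(Q)$ — one has to make sure that the combinatorial bound on facet normals is uniform over all aggregations, which is precisely where the boundedness/antichain structure of integer covering sets enters and where I anticipate the argument requiring the most care.
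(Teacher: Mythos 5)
Your proof takes essentially the same route as the paper: record that $\mathcal A_k(Q)=\I(\Omega_A)$ is nonempty and full-dimensional (its recession cone contains $\R^n_+$) with $(0,\ldots,0,1)\in\Omega_A$, combine Propositions~\ref{prop: first_step} and~\ref{prop: second_step} to conclude that $\cl\cone(\Omega_A)$ has only finitely many extreme rays, and invoke Theorem~\ref{theo: equiv}. This matches the paper's argument exactly; the additional speculation about how the two propositions are proved is not part of the statement at hand and need not be assessed here.
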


This paper is organized as follows. In Sect.~\ref{sec: 2} we show Theorem~\ref{theo: equiv}, and give the actual characterization for extreme rays of the $\cl \cone (\proj_{L^\perp} \Omega)$ in Theorem~\ref{theo: equiv}.
In Sect.~\ref{sec: 3}, we apply the proof technique we developed from Sect.~\ref{sec: 2} to prove the main Theorem~\ref{theo: aggregation_sign_pattern}. 

\paragraph{Notation}
For a positive integer $n$, we set $[n]: = \{1,\ldots, n\}$. For sets $S \subseteq \R^n$ and $I \subseteq [n]$, denote by $\proj_{I} S$ the orthogonal projection of $S$ onto the space of variables with indices in $I$. For a vector $x \in \R^n, x_{[t]}: = (x_1, \ldots, x_t)$ denotes the projection of $x$ onto the first $t$ components. 
For a linear subspace $L \subseteq \R^n$, we denote by $\proj_L x$ the orthogonal projection of vector $x$ onto $L$, and for any $X \subseteq \R^n, \proj_{L} X :=\{\proj_L x \mid x \in X\}$. $(X)_+ := \{\lambda x \mid \lambda \geq 0, x \in X\}$ denotes the cone that contains all non-negative multiplication of elements in $X$. 
For a closed convex set $S, \rec(S):=\{r \mid s + \lambda r \in S, \forall s \in S, \lambda \geq 0\}$ denotes the \emph{recession cone} of $S$. Given a closed convex cone $K$, we say $K$ is \emph{pointed} if $K \cap (-K) = \{0\}$.
When the dimension of the ambient space is clear, we denote by $e^j$ the $j$-th unit vector.

\section{Equivalent Condition for the Polyhedrality of Closure}
\label{sec: 2}

\subsection{Proof of Theorem~\ref{theo: equiv}}
We devote the first part of this section to the proof of Theorem~\ref{theo: equiv}. The following \emph{extended Farkas' Lemma} is not too well-known and we explicitly state it here.
\begin{theorem}[Extended Farkas' Lemma, Corollary 3.1.2 in \cite{goberna1998linear}]
\label{theo: extended_farkas_lemma}
The inequality $a^\transp x \leq b$ is a consequence of the non-empty system $\{a_t^\transp x \leq b_t \ \forall t \in T\}$ iff $(a,b) \in \cl \cone \{(a_t, b_t) \ \forall t \in T, (0, \ldots, 0, 1)\}$.
\end{theorem}

When $T$ is a finite set, this theorem reduces to the standard Farkas' Lemma.
From the definition of \emph{closure} in Definition~\ref{def: closure}, we can immediately obtain the following result.
\begin{proposition}
\label{prop: valid_ineq_for_closure}
Given a set $\Omega \subseteq \R^{n+1}$ which contains $(0, \ldots, 0, 1)$, and $\I(\Omega) \neq \emptyset$. Then $\alpha^\transp x \leq \beta$ is a valid inequality to $\I(\Omega)$ iff $(\alpha, \beta) \in \cl \cone(\Omega)$.
\end{proposition}

\begin{proof}
Consider the linear system $\{x \in \R^n \mid \omega^\transp (x, -1) \leq 0\ \forall \omega \in \Omega\}$. Since $\I(\Omega)$ is the feasible region given by this linear system which is also non-empty, we know that linear system $\{x \in \R^n \mid \omega^\transp (x, -1) \leq 0\ \forall \omega \in \Omega\}$ is non-empty.
By the Extended Farkas' Lemma~\ref{theo: extended_farkas_lemma} and the assumption that $(0, \ldots, 0, 1) \in \Omega$, we know $\alpha^\transp x \leq \beta$ is valid to $\I(\Omega)$ iff, $(\alpha, \beta) \in \cl \cone (\Omega)$.
\end{proof}

The above proposition has the following obvious implication, and we omit its proof here.
\begin{corollary}
\label{coro: iff_polyhedral_cone}
Given a set $\Omega \subseteq \R^{n+1}$ which contains $(0, \ldots, 0, 1)$, and $\I(\Omega) \neq \emptyset$. Then $\I(\Omega)$ is a polyhedron iff $\cl \cone(\Omega)$ is a polyhedral cone.
\end{corollary}

From this above Corollary~\ref{coro: iff_polyhedral_cone}, we know that in order to show $\I(\Omega)$ is a polyhedron it suffices to show that $\cl \cone(\Omega)$ is a polyhedral cone. The next lemma is helpful for characterizing $\cl \cone(\Omega)$. Here $\oplus$ denotes the \emph{direct sum}. 

\begin{lemma}
\label{lem: easy_lemma}
For any $\Omega \subseteq \R^{n+1}$, let $L = \lin(\cl \cone(\Omega))$. Then 
$\cl \cone(\Omega) =\cl \cone(\proj_{L^\perp} \Omega) \oplus L$, where $\cl \cone(\proj_{L^\perp} \Omega)$ is a pointed, closed convex cone.
\end{lemma}

Recall that a cone is called \emph{pointed} if its linearity space is the origin.
In order to prove the above lemma, we will need the following result.

\begin{lemma}[fact 9 \cite{studeny1993convex}]
\label{lem: cone_direct_sum}
Given a non-empty closed convex cone $K$, $K \cap \lin(K)^\perp$ is a pointed cone and $K = (K \cap \lin(K)^\perp) \oplus \lin(K)$. 
\end{lemma}

Now we are going to prove Lemma~\ref{lem: easy_lemma}. 

\begin{proof}[Proof of Lemma~\ref{lem: easy_lemma}]
By Lemma~\ref{lem: cone_direct_sum}, it suffices to show: $\cl \cone(\proj_{L^\perp} \Omega) = \cl \cone(\Omega) \cap L^\perp$.
First, we want to show $\cl \cone(\proj_{L^\perp} \Omega) \subseteq \cl \cone(\Omega) \cap L^\perp$. The relation $\cl \cone(\proj_{L^\perp} \Omega) \subseteq \cl \cone(L^\perp) = L^\perp$ is obvious. Moreover, for any $\omega \in \Omega, \ \proj_{L^\perp} \omega = \omega - r,$ for some $r \in L$. Hence $\proj_{L^\perp} \Omega \subseteq \Omega + L \subseteq \cl \cone(\Omega) + \cl \cone(\Omega) = \cl \cone(\Omega)$. Therefore, $\cl \cone(\proj_{L^\perp} \Omega) \subseteq \cl \cone(\Omega)$, which completes the proof of this $\subseteq$ direction. 

Then, we want to show that $\cl \cone(\proj_{L^\perp} \Omega) \supseteq \cl \cone(\Omega) \cap L^\perp$. Arbitrarily pick $x^* \in \cl \cone(\Omega) \cap L^\perp$. If $x^* \in \cone(\Omega)$, then $x^* = \proj_{L^\perp} x^* \in \proj_{L^\perp} \cone(\Omega) = \cone(\proj_{L^\perp} \Omega)$. If $x^i \rightarrow x^*$ for a sequence of $\{x^i\} \subseteq \cone(\Omega)$, then $\proj_{L^\perp} x^i \rightarrow x^*$ where $$\proj_{L^\perp} x^i \in \proj_{L^\perp} \cone(\Omega) = \cone(\proj_{L^\perp} \Omega).$$ Hence $x^* \in \cl \cone(\proj_{L^\perp} \Omega)$. This completes the proof.
\end{proof}

It is well-known that, a pointed, closed convex cone is a polyhedral cone, iff it has finitely many different extreme rays. See, e.g., Consequence 5 in \cite{MR1227751}.
Therefore, the main result of this section naturally follows.
\begin{reptheorem}{theo: equiv}
$\Omega$ is a set in $\R^{n+1}$ that contains $(0, \ldots, 0, 1)$, and
let $L = \lin(\cl \cone(\Omega))$. Then $\I(\Omega)$ is polyhedral iff $\cl \cone(\proj_{L^\perp} \Omega)$ has finitely many different extreme rays. 
\end{reptheorem}

\begin{proof}
By Corollary~\ref{coro: iff_polyhedral_cone} and Lemma~\ref{lem: easy_lemma}, we know that, $\I(\Omega)$ is polyhedral iff the pointed closed convex cone $\cl \cone(\proj_{L^\perp} \Omega)$ is polyhedral, and this is true iff $\cl \cone(\proj_{L^\perp} \Omega)$ has finitely many extreme rays. 
\end{proof}

This theorem shows that, in order to show the polyhedrality of closure $\I(\Omega)$, one only has to study the extreme ray of the pointed cone $\cl \cone(\proj_{L^\perp} \Omega)$. In the next section we are going to give a full characterization for those extreme rays, using points in $\Omega$.

\subsection{Characterization of Extreme Rays for the Closed Conical Hull}

First we define a new concept for the convergence of rays in a cone. 
\begin{definition}
Given a sequence $\{\alpha^i\}_{i \geq 1} \subseteq \R^n \setminus \{0\} $ and $\alpha^* \neq 0 \in \R^n$, if there exists $\{\lambda_i\}_{i \geq 1} > 0$ such that $\lim_{i \rightarrow \infty} \lambda_i  \alpha^i = \alpha^*$, then we say $\{\alpha^i\}$ \textbf{conically converges} to $\alpha^*$, or $\alpha^i \xrightarrow{c} \alpha^*$.
\end{definition}


Assume $\cl \cone(\Omega')$ is a pointed cone.
For any \textbf{extreme ray} $r \in \cl \cone(\Omega')$, there are two cases: $r \in \cone (\Omega')$, or $r \in \cl \cone (\Omega') \setminus \cone (\Omega')$. For the first case, it is not too hard to observe that $r \in (\Omega')_+$, since here $r$ is assumed to be an extreme ray. While for the second case, we know $r$ can be expressed as the limit of a convergent sequence in $\cone (\Omega')$, using the definition of \emph{conical convergence}, we know there exists $\{r^i\} \subseteq \conv (\Omega')$ such that $r^i \xrightarrow{c} r$. One of the main results in this section is that, this sequence $\{r^i\}$ can be further picked from $\Omega'$ instead of $\conv (\Omega')$. 
Note that $\cl(\cdot)$ and $\cone(\cdot)$ are not commutative, and this result will be trivial if we switch $\cl(\cdot)$ and $\cone(\cdot)$ with each other. 
We state those two cases formally as the following lemma:
\begin{lemma}
\label{lem: main}
$\Omega$ is a set in $\R^{n+1}$ and let $L = \lin(\cl \cone(\Omega))$. Then for any extreme ray $r \in \cl \cone(\proj_{L^\perp} \Omega)$, either $r \in (\proj_{L^\perp} \Omega)_+$, or there exist different $\{r^i\} \subseteq \Omega$ such that $\proj_{L^\perp} r^i \xrightarrow{c} r$. 
\end{lemma}

Henceforth, when we mention a ray $r$ of a cone, we will make no distinction between $r$ and its positive scalar multiplication. In other words, we say two rays $r^1$ and $r^2$ are different, iff there does not exist $\lambda > 0$ such that $r^1 = \lambda r^2$. 

Before proceeding to the proof of Lemma~\ref{lem: main}, we will require the following lemma.
\begin{lemma}[Supporting Hyperplane Theorem for pointed cone]
\label{lem: supporting_hyperplane_pointed}
Let $K \subseteq \R^n$ be a pointed closed convex cone. Then it is strictly supported at the origin: there is $h \in \R^n$ such that if $x \in K$ and $x \neq 0$, then $h^\transp x > 0$.
\end{lemma}

\begin{proof}
Since $K$ is pointed, we know the polar cone $K^\circ$ is full-dimensional. So we can find an interior point $x^* \in K^\circ$, which has: $ x^\transp x^*  < 0$ for all $x \neq 0 \in K$. Then simply picking $h = -x^*$ completes the proof.
\end{proof}

The next lemma is also well-known in literature.

\begin{lemma}[Lemma 2.4 in \cite{husseinov1999note}]
\label{lem: victor_klee_extreme}
Let $S$ be a non-empty closed set in $\R^n$. Then, every extreme point of $\cl \conv(S)$ belongs to $S$.
\end{lemma}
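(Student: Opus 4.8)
The plan is to prove the statement in two stages: first for \emph{exposed} points of $C := \cl\conv(S)$ by a direct sequential-compactness argument, and then for arbitrary extreme points by invoking the classical theorem of Straszewicz that the exposed points of a closed convex set are dense among its extreme points. Note that $C$ is a nonempty closed convex set; if it has no extreme point the claim is vacuous, so we may assume it has one, which in particular forces $C$ to contain no line (a closed convex set containing a line has every point as the midpoint of a nondegenerate segment along that line, hence no extreme point).

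For the first stage, let $x$ be an exposed point of $C$, say exposed by $v \in \R^n$, so that $\mu := v^T x = \max_{z \in C} v^T z$ while $v^T z < \mu$ for all $z \in C \setminus \{x\}$. The key observation is that the cap $K := \{z \in C : v^T z \ge \mu - 1\}$ is compact: it is closed, and by the standard calculus of recession cones its recession cone equals $\rec(C) \cap \{d : v^T d \ge 0\}$; if this contained a nonzero $d$, then $x + d \in C$ with $v^T(x+d) \ge \mu$, forcing $v^T(x+d) = \mu$ and hence $x+d = x$ since $x$ is exposed, a contradiction, so the recession cone of $K$ is $\{0\}$ and $K$ is bounded. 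On the other hand $\mu = \sup_{z \in C} v^T z = \sup_{s \in S} v^T s$, since the supremum of a linear functional over $\conv(S)$ equals its supremum over $S$ and passing to the topological closure does not change it. Hence there is a sequence $(s^k)_{k \ge 1} \subseteq S$ with $v^T s^k \to \mu$; eventually $s^k \in K$, so $(s^k)$ is bounded and, $S$ being closed, some subsequence converges to a point $s^* \in S$. Then $s^* \in C$ and $v^T s^* = \mu$, so $s^* = x$ by exposedness, and therefore $x = s^* \in S$.

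For the second stage, let $x$ be an arbitrary extreme point of $C$. By Straszewicz's theorem the exposed points of $C$ are dense in the extreme points of $C$, so there is a sequence of exposed points $x^k \to x$; the first stage gives $x^k \in S$ for every $k$, and since $S$ is closed we conclude $x \in S$. The main obstacle is the first stage, and within it the compactness of the cap $K$: this is precisely where the possible unboundedness of $S$ (and the fact that $\conv(S)$ need not be closed) is controlled, and it relies on pairing the exposedness of $x$ with the standard facts that the recession cone of an intersection of closed convex sets is the intersection of the recession cones and that a nonempty closed convex set is bounded iff its recession cone is trivial. It is also worth recording that Straszewicz's theorem guarantees $C$ has an exposed point whenever it has an extreme point, so the first stage is genuinely used.
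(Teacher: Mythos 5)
Your proof is correct, but it is worth noting that the paper does not actually prove this statement at all: it is imported as a citation (Theorem 3.5 of Klee's 1957 paper, quoted as Lemma 2.4 of Husseinov), so your argument is a self-contained replacement rather than a variant of an in-paper proof. Your two-stage route is the standard way to establish Klee's extension of Milman's converse to possibly unbounded sets, and the details check out: for an exposed point $x$ of $C=\cl\conv(S)$ with exposing vector $v$ and value $\mu$, the cap $K=\{z\in C \mid v^{T}z\ge \mu-1\}$ is nonempty (it contains $x$), so the recession-cone identity $\rec(K)=\rec(C)\cap\{d \mid v^{T}d\ge 0\}$ applies, and any nonzero $d$ in it would give $x+d\in C$ with $v^{T}(x+d)=\mu$, contradicting exposedness; hence $K$ is compact. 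Combined with $\sup_{z\in C}v^{T}z=\sup_{s\in S}v^{T}s$ (a linear functional has the same supremum over $S$, $\conv(S)$ and $\cl\conv(S)$), a maximizing sequence from the closed set $S$ has a limit point $s^{*}\in S\cap C$ with $v^{T}s^{*}=\mu$, forcing $s^{*}=x$. The passage to general extreme points correctly uses Straszewicz's theorem in its form for arbitrary closed convex subsets of $\R^{n}$ (e.g.\ Rockafellar, Theorem 18.6), which is the version needed here since $\cl\conv(S)$ may well be unbounded (as it is in the paper's application to $\Omega'$), and closedness of $S$ then transfers membership to the limit. What your approach buys is self-containedness at the cost of invoking Straszewicz and recession-cone calculus; what the paper's citation buys is brevity. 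Minor remarks: the opening paragraph about $C$ containing no line is not actually used anywhere in your argument and could be dropped, and in the degenerate case $C=\{x\}$ the conclusion is immediate, so the implicit assumption $v\neq 0$ in the definition of an exposed point causes no loss.
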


Now we are ready to prove Lemma~\ref{lem: main}. 
 \begin{proof}[Proof of Lemma~\ref{lem: main}]
 Let $\Omega_L: = \proj_{L^\perp} \Omega$. 
Lemma~\ref{lem: easy_lemma} says that, $\cl \cone(\Omega_L)$ is a pointed, closed convex cone, then from Lemma~\ref{lem: supporting_hyperplane_pointed}, we can find a supporting hyperplane $h^\transp x = 0$ such that $h^\transp \omega > 0$ for all $\omega \neq 0 \in \cl \cone(\Omega_L)$. 
Denote the normalized version of $\Omega_L$ as: $\Omega' := \{\frac{\omega}{h^\transp \omega} \mid \omega \in \Omega_L \setminus \{0\}\}$.
Here $\Omega'$ is well-defined since for all $\omega \neq 0 \in \Omega_L$ there is $h^\transp \omega > 0$. 
 \begin{claim}
 $\{x \in \R^n \mid h^\transp x  = 1\} \cap \cl \cone(\Omega_L) = \cl \conv (\Omega')$.
 \end{claim}
 \begin{cpf}
 First, we show  $\{x \in \R^n \mid h^\transp  x  = 1\} \cap \cl \cone(\Omega_L) \subseteq \cl \conv (\Omega')$. Arbitrarily pick $\alpha^*$ such that $h^\transp  \alpha^* = 1$, and there exist $\{\alpha^i\} \subseteq \cone(\Omega_L)$ such that $\alpha^i \rightarrow \alpha^*$. Denote $\beta^i: = \frac{\alpha^i}{h^\transp  \alpha^i}$. Since $\alpha^i \rightarrow \alpha^*, h^\transp  \alpha^* = 1$, we know $h^\transp  \alpha^i \rightarrow 1$. Hence we also have $\beta^i \rightarrow \alpha^*$, and here $\beta^i \in \{x \in \R^n \mid h^\transp x = 1\} \cap \cone(\Omega_L)$.  In the following, we show: $ \{x \in \R^n \mid h^\transp  x = 1\} \cap \cone (\Omega_L) \subseteq  \conv (\Omega')$, which will imply that $\alpha^* \in \cl \conv (\Omega')$ since $\beta^i \rightarrow \alpha^*$ and $\beta^i \in \{x \in \R^n \mid h^\transp x = 1\} \cap \cone(\Omega_L)$. According to the arbitrariness of $\alpha^* \in \{x \in \R^n \mid h^\transp x  = 1\} \cap \cl \cone(\Omega)$, this will complete the proof of $\{x \in \R^n \mid h ^\transp x  = 1\} \cap \cl \cone(\Omega_L) \subseteq \cl \conv (\Omega')$. 

Pick $\beta \in \{x \in \R^n \mid h^\transp  x = 1\} \cap \cone (\Omega_L),$ we can write it as: $\beta = \sum_{i=1}^k \lambda_i b^i$ for some $\lambda_i > 0, b^i \in \Omega_L, i \in [k], k \in \N$. Here because $\beta \in \{x \in \R^n \mid h^\transp  x = 1\} $, we know $\sum_{i=1}^k \lambda_i h^\transp b^i = 1$. Therefore, we can also write $\beta$ as:
$$
\beta = \sum_{i=1}^k ( \lambda_i h^\transp b^i) \cdot \frac{b^i}{h^\transp b^i}, \text{ here } \frac{b^i}{h^\transp b^i} \in \Omega',\ \sum_{i=1}^k \lambda_i h^\transp b^i = 1.
$$
We get $\beta \in \conv(\Omega')$, which concludes $ \{x \in \R^n \mid h^\transp  x = 1\} \cap \cone (\Omega_L) \subseteq  \conv(\Omega')$.

Lastly, we show the other direction $\{x \in \R^n \mid h^\transp  x = 1\} \cap \cl \cone(\Omega_L) \supseteq \cl \conv (\Omega')$. By definition, $\Omega' \subseteq \{x \in \R^n \mid h^\transp  x = 1\}$, which implies $\cl \conv (\Omega') \subseteq \{x \in \R^n \mid h^\transp  x = 1\}$. On the other hand, clearly $\Omega' \subseteq \cone(\Omega_L)$, so $\cl \conv (\Omega')\subseteq \cl \cone (\Omega_L)$, and we complete the proof for this claim. 
\end{cpf}
Given an extreme ray $r$ of $\cl \cone(\Omega_L)$, w.l.o.g. we assume that $h^\transp  r = 1$.
Then $r \in \Omega_L$ iff $r \in \Omega'$.
From the above claim, we also know $r \in \cl \conv (\Omega')$.
\begin{claim}
$r$ is an extreme point of $\cl \conv (\Omega')$.
\end{claim}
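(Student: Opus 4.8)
The plan is to show that the extreme ray $r$ of $K=\cl\cone(\Omega)$, normalized so that $hr=1$, becomes an extreme point of $\cl\conv(\Omega')$ sitting inside the affine hyperplane $\{x\mid hx=1\}$. The key observation is that this hyperplane intersects the pointed cone $K$ in exactly the ``base'' of the cone, and extreme rays of a pointed cone correspond precisely to extreme points of any such base. So the content is entirely elementary convex geometry: I would argue by contradiction.

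Suppose $r$ is not an extreme point of $\cl\conv(\Omega')$. Then, using the previous claim that $\{x\mid hx=1\}\cap K=\cl\conv(\Omega')$, we can write $r=\tfrac12(u+v)$ with $u,v\in\{x\mid hx=1\}\cap K$ and $u\neq v$. In particular $u,v\in K$, so $r$ is a convex combination of two distinct points of the cone $K$ lying on the same hyperplane $hx=1$ as $r$. Since $hu=hv=1$, neither $u$ nor $v$ is a nonnegative scalar multiple of the other (a scalar multiple would force $u=v$ on this hyperplane), and likewise neither is a scalar multiple of $r$. This exhibits $r$ as lying in the relative interior of a two-dimensional (or at least non-degenerate) subcone $\cone\{u,v\}\subseteq K$, contradicting the assumption that $r$ is an extreme ray of $K$: an extreme ray cannot be written as a conical (here even convex) combination of cone elements not all parallel to it. Hence $r$ must be an extreme point of $\cl\conv(\Omega')$.

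The only subtlety worth spelling out is the precise definition of extreme ray being used and making sure the midpoint decomposition genuinely violates it. Concretely, $r$ extreme ray of $K$ means: whenever $r=\lambda_1 p_1+\lambda_2 p_2$ with $\lambda_i\ge 0$ and $p_i\in K$, both $p_i$ are nonnegative multiples of $r$. Taking $\lambda_1=\lambda_2=\tfrac12$ and $p_i\in\{u,v\}$ forces $u,v$ to be multiples of $r$; combined with $hu=hv=hr=1$ this gives $u=v=r$, contradicting $u\neq v$. This closes the argument.

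I do not anticipate a real obstacle here; the main thing to get right is simply invoking the correct earlier claim ($\{x\mid hx=1\}\cap\cl\cone(\Omega)=\cl\conv(\Omega')$) to translate between the hyperplane section and the convex hull, and being careful that the two pieces $u,v$ stay inside $K$ (not just inside $\cl\conv(\Omega')$) so that the extreme-ray property of $r$ can be applied. After establishing this claim, the remainder of the proof of Lemma~\ref{lem: main} will follow by applying Lemma~\ref{lem: victor_klee_extreme} to the closed set $\Omega'$ (or its closure), concluding that $r\in\Omega'$, hence $r\in\Omega$ when $r\in\cone(\Omega)$, and otherwise obtaining the conically convergent sequence from $\Omega'$ and rescaling back to $\Omega$.
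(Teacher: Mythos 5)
Your proof is correct and takes essentially the same approach as the paper: assume $r$ is not extreme, decompose it as a nontrivial convex combination of points in $\cl\conv(\Omega')\subseteq K$, invoke the extreme-ray property of $r$ in $K$ to conclude each piece is a nonnegative multiple of $r$, and then use the normalization $hx=1$ to force all pieces to equal $r$, a contradiction. The only cosmetic difference is that you use the midpoint characterization of extreme points ($r=\tfrac12(u+v)$) while the paper uses a general finite convex combination $r=\sum_i\lambda_i a^i$ with $a^i\neq r$; these are equivalent and both arguments go through.
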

\begin{cpf}
Assuming $r = \sum_{i=1}^k \lambda_i a^i$ for $\lambda_i > 0, \sum_{i=1}^k \lambda_i = 1$ and $r \neq a^i \in \cl \conv (\Omega')$. From the definition of $\Omega'$, we also have $h^\transp a^i = 1, a^i \in \cl \cone(\Omega_L)$. According to the extreme ray assumption of $r$, while it can be written as the conical combination (convex combination is also conical combination) of other points in $\cl \cone(\Omega_L)$, we know there exists $\gamma_i > 0$ such that $a^i = \gamma_i r$. Since $h^\transp  r = h^\transp  a^i = 1$, we have $\gamma_i = 1$, meaning $a^i = r$, which contradicts the assumption that $r \neq a^i$. 
\end{cpf}
So for any extreme ray $r \in \cl \cone(\Omega_L)$ with $h^\transp r = 1$, $r$ is an extreme point of $\cl \conv (\Omega')$. It is obvious that $\cl \conv (\Omega') = \cl \conv( \cl ({\Omega'}))$, 
so $r$ is an extreme point of $\cl \conv(\cl ({\Omega'}))$. By Lemma~\ref{lem: victor_klee_extreme}, we obtain that $r \in \cl ({\Omega'})$. By definition of $\Omega'$, it implies that either $r  \in (\Omega_L)_+$, or there exists different $\{r^i\} \subseteq \Omega_L$ such that $r^i \xrightarrow{c} r$. This completes the proof.
 \end{proof}

\section{$k$-aggregation Closure of Covering Sets}
\label{sec: 3}

In the last section, we have shown that, closure $\I(\Omega)$ is a polyhedron iff $\cl \cone(\proj_{L^\perp} \Omega)$ has finitely many extreme rays (Theorem~\ref{theo: equiv}), where each extreme ray can be characterized by points in $\proj_{L^\perp} \Omega$ (Lemma~\ref{lem: main}). 
As an application to these results, in this section, we are going to establish the polyhedrailty of the $k$-aggregation closure for covering sets. For detailed definition of $k$-aggregation closure $\mathcal{A}_k(Q)$ for a covering set $Q$, see \eqref{eq: defn_generalized_k_agg} in the Introduction Section.

The well-known Dickson's Lemma will be used in our later proof, and it also played an important role in some other relevant papers, see, e.g., \cite{MR2969261,MR4207341,pashkovich2021aggregation}. Here we mention two equivalent statements for this lemma. 
\begin{lemma}[Dickson's Lemma \cite{dickson1913finiteness}]
\label{lem: Dickson_lemma}
For any $X \subseteq \N^n$:
\begin{enumerate}
\item  there exists $X' \subseteq X$ with $|X'| < \infty$, such that every $x \in X$ satisfies $x' \leq x$ for some $x' \in X'$.
\item The partially-ordered set (poset) $(X, \leq)$ has no infinite antichain.
\end{enumerate}
\end{lemma}

In order theory, an \emph{antichain} is a subset of a poset such that any two distinct elements in the subset are incomparable.
From Dickson's Lemma we have the following proposition.

\begin{proposition}
\label{prop: integerl_hull_covering}
For any covering polyhedron $C$, its integer hull $\conv(C \cap \Z^n)$ is also a covering polyhedron.
\end{proposition}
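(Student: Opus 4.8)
The plan is to compress the (a priori infinite) set $C\cap\Z^n$ into the convex hull of finitely many integer points, use Dickson's Lemma to make this finiteness precise, and then recognise the resulting finitely generated polyhedron as a covering polyhedron. Write $C=\{x\in\R^n_+\mid Mx\ge d\}$ with $M\ge 0$, $d\ge 0$, and put $X:=C\cap\Z^n$; since $C\subseteq\R^n_+$ this is $C\cap\N^n$. If $C=\emptyset$ then $\conv(X)=\emptyset$, which is a covering polyhedron, so assume $C\neq\emptyset$. The one structural feature I will lean on is that $C$ is \emph{upward closed}: if $x\in C$ and $v\ge 0$ then $x+v\in C$, because $M(x+v)=Mx+Mv\ge Mx\ge d$. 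Applying Dickson's Lemma~\ref{lem: Dickson_lemma}(1) to $X\subseteq\N^n$ produces a finite $X'\subseteq X$ such that every $x\in X$ dominates some $x'\in X'$ coordinatewise; $X'\neq\emptyset$ since $X\neq\emptyset$ (round any point of $C$ up coordinatewise and invoke upward closedness).

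The core of the argument is the identity $\conv(X)=\conv(X')+\R^n_+$. The inclusion $\subseteq$ is immediate: every $x\in X$ lies in $x'+\R^n_+\subseteq\conv(X')+\R^n_+$ for a suitable $x'\in X'$, and the right-hand set is convex. For $\supseteq$, fix $x'\in X'$: since $x'\in C$ and $C$ is upward closed, \emph{every} integer point $x'+v$, $v\in\N^n$, belongs to $C$, hence to $X$; and the convex hull of $\{x'+v\mid v\in\N^n\}$ is exactly $x'+\R^n_+$ (any $x'+w$ with $w\ge 0$ lies in the box spanned by the integer points obtained by rounding each $w_j$ down or up). Therefore $x'+\R^n_+\subseteq\conv(X)$ for each $x'\in X'$, and taking the union over $X'$ followed by convex hulls yields $\conv(X')+\R^n_+=\conv\bigl(\bigcup_{x'\in X'}(x'+\R^n_+)\bigr)\subseteq\conv(X)$.

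It remains to check that $P:=\conv(X')+\R^n_+$ is a covering polyhedron. By Minkowski--Weyl it is a polyhedron (the convex hull of the finite set $X'$ plus $\cone(e^1,\dots,e^n)$); it is nonempty, satisfies $P\subseteq\R^n_+$, and is upward closed, i.e.\ $P+\R^n_+=P$. Taking any description $P=\{x\mid a^ix\ge b_i,\ i\in[r]\}$, upward closedness forces $a^i\ge 0$ for every $i$ (otherwise pushing a feasible point to $+\infty$ along a coordinate direction with negative coefficient violates the inequality). Appending the inequalities $x\ge 0$, which are valid since $P\subseteq\R^n_+$, and deleting every constraint with $b_i\le 0$ (such a constraint is implied by $x\ge 0$ together with $a^i\ge 0$), we arrive at $P=\{x\in\R^n_+\mid Mx\ge d\}$ with $M\ge 0$ and $d\ge 0$. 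I expect the delicate point to be the inclusion $\conv(X')+\R^n_+\subseteq\conv(X)$: one must verify that the rays $\R^n_+$ attached to each minimal integer point are actually realised by genuine integer points of $C$, and this is precisely where upward closedness of a covering polyhedron enters.
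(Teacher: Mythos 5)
Your proof is correct and follows essentially the same route as the paper's: apply Dickson's lemma to $C\cap\Z^n$ to extract a finite minimal set, establish $\conv(C\cap\Z^n)=\conv(\text{finite set})+\R^n_+$ by a coordinatewise domination argument in both directions, and then read off nonnegativity of the defining data from upward closedness and $P\subseteq\R^n_+$. The only cosmetic differences are that you treat the empty case explicitly and spell out the rounding/box argument for $\conv\{x'+v\mid v\in\N^n\}=x'+\R^n_+$, where the paper simply invokes that $\R^n_+$ is the recession cone of the integer hull.
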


\begin{proof}
Denote $C_I: = C \cap \Z^n$. Since $C_I \subseteq \N^n$, by Dickson's Lemma~\ref{lem: Dickson_lemma}, we know there exists a finite subset $C_I' \subseteq C_I$, such that every $x \in C_I$ satisfies $x' \leq x$ for some $x' \in C_I'$. Next we show: $\conv(C_I') + \R^n_+ = \conv(C_I)$.

Since $C'_I \subseteq C_I$, we know $\conv(C_I') + \R^n_+ \subseteq \conv(C_I) + \R^n_+ = \conv(C_I)$, here the last equality is because that $\R^n_+$ is the recession cone of $C$ and the integer hull $\conv(C_I)$. On the other hand, for any $x \in \conv(C_I)$, it can be written as the convex combination of points in $C_I$, meaning that there exists $t_j \geq 0, y_j \in C_I, j \in J, \sum_{j \in J} t_j = 1$, such that $x = \sum_{j \in J} t_j y_j$. From the construction of $C_I'$, we know for each $y_j \in C_I,$ there is $y_j' \leq y_j$ for some $y_j' \in C_I'$. So: $\sum_{j \in J} t_j y_j' \leq \sum_{j \in J} t_j y_j = x$, and here $\sum_{j \in J} t_j y_j' \in \conv(C_I')$. Hence $x \in \conv(C_I') + \R^n_+$, which concludes the proof of $\conv(C_I') + \R^n_+ = \conv(C_I)$.

So far we have shown that the integer hull $\conv(C_I)$ is a polyhedron, due to the finiteness of $C_I'$. Denote $\conv(C_I): = \{x \in \R^n_+ \mid Ax \leq b\}$. Lastly, we want to show that $(A,b)$ can be assumed to have non-positive entries. This is obvious since if some entry $A_{ij} > 0$, then $e^j$ will not be an extreme ray of the recession cone of $\conv(C_I)$. For the right-hand-side vector $b$, if some entry is positive, then its corresponding constraint will become redundant since $\conv(C_I) \subseteq \R^n_+$. Therefore, we complete the proof.
\end{proof}

Given a covering set $Q$, for the $\Lambda(Q)$ as defined in \eqref{eq: Lambda_S}, we have the following easy observation:
\begin{observation}
$\Lambda(Q) \subseteq \R^n_-$.
\end{observation}

Now we denote $\mathcal{C}^k(Q)$ to be the set of all covering polyhedron with $k$ valid inequalities of $Q$:
$$
\mathcal{C}^k(Q) = \Big\{\{x \in \R^n_+ \mid (f^j)^\transp x \leq \sup\{(f^j)^\transp y \mid y \in Q\}, \ \forall j \in [k] \} \mid f^1, \ldots, f^k \in \Lambda(Q)  \Big\}.
$$

For any $C \in \mathcal{C}^k(Q)$, let
$$
\mathscr{F}_C: = \{(\alpha, \beta)  \mid \alpha^\transp x \leq \beta \text{ defines a facet of }\conv(C \cap \Z^n), \|(\alpha, \beta)\| = 1\}.
$$
Here the additional constraint of $\|(\alpha, \beta)\| = 1$ is to ensure that $\mathscr{F}_C$ is a finite set instead of a cone, and the norm $\|\cdot\|$ here refers to the Euclidean norm.
By Proposition~\ref{prop: integerl_hull_covering}, we know $\mathscr{F}_C \subseteq \R^{n+1}_-$.
Lastly, denote the family of $k$-aggregation cuts as:
\begin{equation}
\label{eq: Omega_k_aggregation_covering}
\Omega_Q: = \{(-e^j, 0)\}_{j \in [n]} \bigcup \{(0, \ldots, 0, 1)\}  \bigcup_{C \in \mathcal{C}^k(Q)} \F_C.
\end{equation}
Here $(-e^j, 0)$ corresponds to the non-negative constraint $x_j \geq 0$. 
According to our definition in \eqref{eq: defn_generalized_k_agg}, we have $\I(\Omega_Q) = \mathcal{A}_k (Q)$.

The next two propositions will be the corner stones to prove the main Theorem~\ref{theo: aggregation_sign_pattern}.

\begin{proposition}
\label{prop: first_step}
$\cl \cone(\Omega_Q) = \cone(\Omega_Q)$. 
\end{proposition}

\begin{proposition}
\label{prop: second_step}
$\cone(\Omega_Q)$ has only finitely many different extreme rays. 
\end{proposition}

Given these two results and the fact that covering set $Q$ always has full-dimensional integer hull ($\R^n_+$ is the recession cone), then the main Theorem~\ref{theo: aggregation_sign_pattern} follows directly from Theorem~\ref{theo: equiv}.

\begin{proof}[Proof of Theorem~\ref{theo: aggregation_sign_pattern}]
Let $L = \lin(\cl \cone(\Omega_Q)) \subseteq \R^{n+1}$. Arbitrarily pick a non-zero $(\alpha, a_0) \in L$, from definition of the linearity space, we have $(\alpha, a_0) \in \cl \cone(\Omega_Q)$ and $-(\alpha, a_0) \in \cl \cone(\Omega_Q)$. 
By Proposition~\ref{prop: valid_ineq_for_closure}, we know that $\alpha^\transp x \leq a_0$ and $-\alpha^\transp x \leq -a_0$ are both valid inequalities to $\I(\Omega_Q)$, which is $\mathcal{A}_k(Q)$. This implies that $\mathcal{A}_k(Q)$ is contained in the hyperplane given by $a^\transp x = a_0$. However, a covering set $Q$ obviously has full-dimensional integer hull, this gives the contradiction. Therefore, $L = \{0\}$, and Theorem~\ref{theo: equiv} tells: $\mathcal{A}_k(Q)$ is polyhedral iff $\cl \cone(\Omega)$ has finitely many different extreme rays. Then Proposition~\ref{prop: first_step} together with Proposition~\ref{prop: second_step} complete the proof.
\end{proof}

Next, we are devoting the following two subsections to the proofs of these two propositions individually.

\subsection{Proof of Proposition~\ref{prop: first_step}}

In order to prove Proposition~\ref{prop: first_step}, first we will need the following lemma. For a finite set $X \subseteq \N^n$, we let $\max(X)$ denote the vector $(\max\{x_1 \mid x \in X\}, \ldots, \max\{x_n \mid x \in X\})$. 
\begin{lemma}
    \label{lem: projection}
Given a covering set $Q \subseteq \R^n_+$ and $k \in \N$, for any $t < n, t \in \N$, there exists another covering set $Q' \subseteq \R^t_+$ such that $\proj_{[t]} \mathcal{A}_k(Q) = \mathcal{A}_k(Q')$. Moreover, for any $C' \in \mathcal{C}(Q')$, there exists $C \in \mathcal{C}^k(Q)$ such that $\proj_{[t]} \conv(C \cap \Z^n) = \conv(C' \cap \Z^t)$. 
    \end{lemma}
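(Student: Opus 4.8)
The plan is to prove Lemma~\ref{lem: projection} by explicitly constructing the covering set $Q'$ as a projection of $Q$, but with a crucial modification to keep track of lower bounds on the truncated coordinates. First I would observe that naively projecting $\mathcal{A}_k(Q)$ onto $[t]$ is problematic because $\mathcal{A}_k$ involves integer hulls of polyhedra, and projection does not commute with taking integer hulls in general. So the key idea is to exploit that all data is nonnegative (covering structure) together with Dickson's lemma: the integer points of any $C \in \mathcal{C}(Q)$ that matter are those minimal in the coordinatewise order, and by Dickson's lemma there are only finitely many relevant ``boundary'' configurations. More concretely, since $C \cap \Z^n$ lives in $\N^n$ and its integer hull is a covering polyhedron (Proposition~\ref{prop: integerl_hull_covering}) with recession cone $\R^n_+$, the value of $\proj_{[t]} \conv(C \cap \Z^n)$ is governed by how small the last $n-t$ coordinates can be made; and because the facet-defining inequalities of $\conv(C\cap\Z^n)$ have nonnegative coefficients, making the last $n-t$ coordinates equal to $0$ is always feasible relative to the projection. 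Hence I expect $\proj_{[t]} \conv(C\cap\Z^n)$ to be exactly the integer hull (in $\Z^t$) of the set obtained by setting $x_{t+1}=\cdots=x_n=0$ in $C$.

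Next I would turn this into the statement at the level of closures. For the first part, $\proj_{[t]}\mathcal{A}_k(Q) = \mathcal{A}_k(Q')$: the inclusion $\proj_{[t]}\mathcal{A}_k(Q)\subseteq\mathcal{A}_k(Q')$ should follow because every valid inequality of $Q'$ (a function of the first $t$ variables only) lifts to a valid inequality of $Q$ with the same coefficients and zeros appended, so each defining polyhedron in the intersection defining $\mathcal{A}_k(Q')$ is the projection of a corresponding defining polyhedron for $\mathcal{A}_k(Q)$; then use that projection is monotone under intersection (projection of an intersection is contained in the intersection of projections). The reverse inclusion is the subtler direction: given a point $\bar x \in \mathcal{A}_k(Q')$, I need to produce a preimage in $\mathcal{A}_k(Q)$; the natural candidate is $(\bar x, 0, \ldots, 0)$, and I would argue it lies in every $\conv(\{x\in\N^n \mid f^j x \le \sup_{Q} f^j\})$ by appealing to the coordinatewise-minimality argument above — appending zeros can only help membership in a covering-type integer hull. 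I would define $Q'$ precisely as $\{x \in \R^t_+ \mid (M_i)_{[t]}\, x \le d_i - (M_i)_{\{t+1,\dots,n\}}\cdot \bar y\}$ for a suitable choice making the right-hand sides as small as possible; but more cleanly, since $Q\subseteq\R^n_+$, just take $Q' := \proj_{[t]} Q$, and check that $\proj_{[t]} Q$ is again a covering set — which it is, because projecting a $\{x\ge 0, Mx\le d, M\le 0, d\le 0\}$ set onto the first $t$ coordinates via Fourier–Motzkin only combines inequalities with nonnegative multipliers and preserves the sign pattern. The "moreover" clause then comes almost for free from the per-polyhedron analysis: each $C' \in \mathcal{C}(Q')$ is determined by $k$ functionals $f^j \in \Lambda(Q') \subseteq \R^t_-$, which I pad with zeros to get functionals in $\Lambda(Q)\subseteq\R^n_-$, yielding the desired $C\in\mathcal{C}(Q)$, and $\proj_{[t]}\conv(C\cap\Z^n)=\conv(C'\cap\Z^t)$ by the zero-padding feasibility argument.

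The step I expect to be the main obstacle is proving $\proj_{[t]}\conv(C\cap\Z^n)=\conv(C'\cap\Z^t)$, i.e. that projection commutes with integer-hull-taking in this specific covering setting. The inclusion $\subseteq$ is straightforward: $C\cap\Z^n$ projects into $C'\cap\Z^t$, hence the convex hulls relate. For $\supseteq$, I must show that every integer point $\bar z \in C'\cap\Z^t$ lifts to an integer point of $C\cap\Z^n$, which I get by taking $(\bar z, 0,\ldots,0)$ and verifying $M_i (\bar z,0,\ldots,0) = (M_i)_{[t]}\bar z \le d_i$ using that $\bar z$ satisfies the (possibly strengthened) right-hand sides of $Q'$ — here I need to be careful that the definition of $Q'$ via Fourier–Motzkin gives right-hand sides at least as tight as what zero-padding requires, which is exactly where nonnegativity of the matrix is used (the eliminated variables have nonpositive coefficients, so setting them to zero is the "hardest" case and is captured among the projected constraints). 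A secondary subtlety is handling $\Lambda(\cdot)$ and the suprema $\sup\{f^j y \mid y \in Q\}$ correctly: I must confirm $\sup\{f^j y \mid y\in Q'\} = \sup\{(f^j,0,\ldots,0)\cdot y \mid y\in Q\}$, which again holds because $Q' = \proj_{[t]}Q$ and the functional ignores the eliminated coordinates. Once these projection-commutation facts are in place, the two displayed equalities in the lemma follow by routine bookkeeping over the index $j\in[k]$ and the intersection over all choices of functionals.
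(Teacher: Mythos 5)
Your central lifting device---produce a preimage in $C\cap\Z^n$ of a projected point by appending zeros on the eliminated coordinates---points the wrong way for the covering case. A covering polyhedron $C=\{x\in\R^n_+ \mid Mx\ge d\}$ and its integer hull are \emph{upward} closed: increasing coordinates preserves feasibility, decreasing them does not. So given $\bar z\in\proj_{[t]}C\cap\Z^t$, it is generally false that $(\bar z,0,\ldots,0)\in C$. Concretely, take $C=\{x\in\R^2_+\mid x_1+x_2\ge 1\}$ and $t=1$: then $\proj_{[1]}C=\R_+$ (and $\proj_{[1]}\conv(C\cap\Z^2)=\R_+$), while the section at $x_2=0$ is $\{x_1\ge 1\}$, so your claim that ``$\proj_{[t]}\conv(C\cap\Z^n)$ equals the integer hull of the $x_{t+1}=\cdots=x_n=0$ section of $C$'' is false. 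The sentence in which you say setting the eliminated variables to zero is the ``hardest'' case and ``is captured among the projected constraints'' is where the slip occurs: the section \emph{is} contained in the projection, but the Fourier--Motzkin projection of a covering polyhedron keeps only those rows with zero entries on the eliminated coordinates, and those surviving constraints are strictly weaker than what zero-padding requires. Zero-padding is the right move for \emph{packing} polyhedra (where small is safe); for covering you must pad with \emph{large} values. The paper does exactly this: take a real witness $y'\ge 0$ with $(\bar z,y')\in C$ and use $(\bar z,\lceil y'\rceil)\in C\cap\Z^n$, which also delivers integrality.

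The same sign reversal undermines your reverse inclusion $\mathcal{A}_k(Q')\subseteq\proj_{[t]}\mathcal{A}_k(Q)$, where you again propose $(\bar x,0,\ldots,0)$ as the universal preimage across all $C\in\mathcal{C}(Q)$. Even after fixing the direction of the padding, there is a further step you do not address: the per-$C$ witnesses $y^C$ can differ, and one needs a \emph{single} lift valid for every $C$ simultaneously before $\proj_{[t]}\bigcap_C$ can be replaced by $\bigcap_C\proj_{[t]}$. The paper handles this by passing to the coordinatewise maximum of the witnesses and exploiting upward closure. Your ``moreover'' clause, by contrast, really is routine---but only because the specific $C$ you build by zero-padding the \emph{functionals} $f^j$ is a cylinder $C'\times\R^{n-t}_+$, for which section and projection coincide; this is not the general zero-padding feasibility you invoked earlier. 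Finally, the Dickson-lemma/minimality framing you open with is not used anywhere in the body of this particular lemma in the paper and does not seem to enter your own argument either.
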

    
    \begin{proof}
    For a covering set $Q$, by the definition of $k$-aggregation closure in \eqref{eq: defn_generalized_k_agg}, we know $\mathcal{A}_k(Q) = \bigcap_{C \in \mathcal{C}^k(Q)} \conv(C \cap \Z^n)$. First, we show that $\proj_{[t]} \mathcal{A}_k(Q) = \bigcap_{C \in \mathcal{C}^k(Q)} \proj_{[t]} \conv(C \cap \Z^n)$.
    
    The $\subseteq$ relation is obvious, so we only have to show that, for any $x' \in \bigcap_{C \in \mathcal{C}^k(Q)} \proj_{[t]} \conv(C \cap \Z^n),$ there is also $x' \in \proj_{[t]} \mathcal{A}_k(Q)$. By assumption, we know for any $C \in \mathcal{C}^k(Q)$, there exists $y^C \in \R^{n-t}_+$ such that $(x', y^C) \in \conv(C \cap \Z^n)$. In other words, for any $C \in \mathcal{C}^k(Q)$, there exists $\lambda_{C, i} \geq 0$ for $i \in [n+1]$, $(x^{C,i}, y^{C,i}) \in C \cap \Z^n, \sum_{i=1}^{n+1} \lambda_{C,i} = 1$, such that $\sum_{i=1}^{n+1} \lambda_{C,i} (x^{C,i}, y^{C,i}) = (x', y^C)$. Now take $y^*: = \max(\{y^{C,i} \mid C \in \mathcal{C}^k(Q), i \in [n+1]\})$ (see our denotation ahead of this lemma). 
Since each $C \in \mathcal{C}^k(Q)$ is a covering polyhedron and $(x^{C,i}, y^{C,i}) \in C \cap \Z^n,$ so there is also $(x^{C,i}, y^*) \in C \cap \Z^n$. Hence $(x', y^*) = \sum_{i=1}^{n+1} \lambda_{C,i} (x^{C,i}, y^*) \in \conv(C \cap \Z^n)$. Therefore, $(x', y^*) \in \bigcap_{C \in \mathcal{C}^k(Q)} \conv(C \cap \Z^n)$, which implies that $x' \in \proj_{[t]} \mathcal{A}_k(Q)$. 
    
For any $C \in \mathcal{C}^k(Q),$ there is also $\proj_{[t]} \conv(C \cap \Z^n) =\conv \big( \proj_{[t]} (C \cap \Z^n) \big)$. Next we show that $\proj_{[t]} (C \cap \Z^n) = \proj_{[t]} C  \cap \Z^t$. The inclusion $\subseteq$ is obvious. For any $x' \in \proj_{[t]} C  \cap \Z^t$, say $(x', y') \in C$, then there is $(x', \lceil y' \rceil) \in C \cap \Z^n$, which implies that $x' \in \proj_{[t]} (C \cap \Z^n)$. Hence $\proj_{[t]} (C \cap \Z^n) = \proj_{[t]} C  \cap \Z^t$. 

So far we have obtained that $\proj_{[t]} \mathcal{A}_k(Q) = \bigcap_{C \in \mathcal{C}^k(Q)} \conv(\proj_{[t]} C \cap \Z^t)$, and for any $C \in \mathcal{C}^k(Q)$, $\proj_{[t]} (C \cap \Z^n) = \proj_{[t]} C  \cap \Z^t$. 
Let $Q': = \proj_{[t]} Q$, which is a covering set in $\R^t_+$. Lastly it suffices for us to show $\mathcal{C}(Q') = \{\proj_{[t]} C \mid C \in \mathcal{C}^k(Q)\}$. Since if this is true, then $\proj_{[t]} \mathcal{A}_k(Q) = \bigcap_{C \in \mathcal{C}^k(Q)} \conv(\proj_{[t]} C \cap \Z^t) = \bigcap_{C' \in \mathcal{C}(Q')} \conv(C' \cap \Z^t) = \mathcal{A}_k(Q')$, and 
for any $C' \in \mathcal{C}(Q'), \conv(C' \cap \Z^t) = \conv(\proj_{[t]} C  \cap \Z^t) = \conv \big(\proj_{[t]} (C \cap \Z^n) \big) = \proj_{[t]} \conv(C \cap \Z^n) $ for some $C \in \mathcal{C}^k(Q)$. These complete the proof of this lemma.

Arbitrarily pick $C \in \mathcal{C}^k(Q)$, it can be written as $C = \{x \in \R^n_+ \mid (f^j)^\transp x \leq g_j, \ \forall j \in [k]\}$, here each $(f^j)^\transp x \leq g_j$ is valid to $Q,$ and $(f^j, g_j) \in (\R^n_-, \R_-)$. 
Define $J_0: = \{j \in [k] \mid f^j_i = 0, \ \forall i \in [n] \setminus [t]\}$, and $J_1 : = [k] \setminus J_0$. 
Now for this particular $C$, define $C': = \{x \in \R^t_+ \mid (f^j_{[t]})^\transp x \leq g_j \ \forall j \in J_0, \textbf{0}^\transp x \leq 0 \ \forall j \in J_1\}$. Since $(f^j)^\transp x \leq g_j$ is valid to $Q$, we know that $(f^j_{[t]})^\transp x \leq g_j$ is valid to $\proj_{[t]}Q$, which is just $Q'$. Hence $C' \in \mathcal{C}(Q')$. 
Moreover, we have the following claim:
\begin{claim}
$\proj_{[t]} C = C'$.
\end{claim}
\begin{cpf}
$\proj_{[t]} C \subseteq C'$ is obvious. Now arbitrarily pick $x' \in C'$, which satisfies $ (f^j_{[t]})^\transp x' \leq g_j \ \forall j \in J_0$, we want to show that $x' \in \proj_{[t]} C$. Consider point $x^* : = (x', N, \ldots, N)$ for some large enough number $N$. Then by definition of $J_0, (f^j)^\transp x^* = (f^j_{[t]})^\transp x' \leq g_j$ for any $j \in J_0$, and $(f^j)^\transp x^* < g_j$ for any $j \in J_1$ since $N$ is picked as a large number. Hence $x^* \in C$, which implies that $x' \in \proj_{[t]} C$.
\end{cpf}
From this above claim, we have shown that, for any $C \in \mathcal{C}^k(Q)$, there exists $C' \in \mathcal{C}(Q')$, such that $\proj_{[t]} C = C'$. Hence $\mathcal{C}(Q') \supseteq \{\proj_{[t]} C \mid C \in \mathcal{C}^k(Q)\}$. Now for any $C' \in \mathcal{C}(Q')$, we can write it as $\{x \in \R^t_+ \mid (f'^j)^\transp x \leq g'_j, \ \forall j \in [k]\}$, and we want to show that $C' = \proj_{[t]} C$ for some $C \in \mathcal{C}^k(Q)$. Let $C: = \{x \in \R^n_+ \mid \left((f'^j)^\transp, 0, \ldots, 0 \right) x \leq g'_j, \ \forall j \in [k]\}$, it is easy to see that each $\left((f'^j)^\transp, 0, \ldots, 0 \right) x \leq g'_j$ is valid to $Q$, hence $C \in \mathcal{C}^k(Q)$. Notice that $\proj_{[t]} C = C'$, therefore, we have also shown that $\mathcal{C}(Q') \subseteq \{\proj_{[t]} C \mid C \in \mathcal{C}^k(Q)\}$, completing the proof.
\end{proof}

Now we are ready to prove Proposition~\ref{prop: first_step}.

\begin{proof}[Proof of Proposition~\ref{prop: first_step}]
Notice that $\Omega_Q \subseteq \{x \in \R^{n+1} \mid \|x\| = 1\}$, 
it is therefore equivalent of showing that: for any extreme ray $r$ of $\cl \cone(\Omega_Q)$ with $\|r\| = 1$, there is $r \in \Omega_Q$. 
We prove by induction on dimension. When $n=1$ the result can be trivially verified. Next we assume that the statement of this proposition holds when $n \leq N$, and we consider the case of $n = N+1$. 

Assume for contradiction that $(\alpha^*, \beta^*) \in \cl \cone(\Omega_Q)$ is an extreme ray with $\|(\alpha^*, \beta^*)\| = 1$ and $(\alpha^*, \beta^*) \notin \Omega_Q$. Then by Lemma~\ref{lem: main}, we know there exist different $(\alpha^i, \beta_i) \in \F_{C_i}$ for $C_i \in \mathcal{C}^k(Q), \gamma_i > 0, i \in \N$, such that $\gamma_i (\alpha^i, \beta_i) \rightarrow (\alpha^*, \beta^*)$. 
By Proposition~\ref{prop: integerl_hull_covering}, here each $(\alpha^i, \beta_i) \in \R^{n+1}_-$, so $(\alpha^*, \beta^*) \in \R^{n+1}_-.$
Here w.l.o.g. we further assume that $\beta_i < 0$ for each $i \in \N$, since $(\alpha^i, \beta_i)$ with $\beta_i = 0$ reduces to the trivial non-negative constraints. 
Next we argue by two cases, depending on whether $\supp(\alpha^*) = [n]$ or not. In either case we want to establish the contradiction.

\smallskip \noindent
\textbf{Case 1}: $\supp(\alpha^*) = [n]$.
In this case, 
for each $j \in [n]$ there is $\alpha^*_j < 0$, and we have 
$\{x \in \R^n_+ \mid (\alpha^*)^\transp x \geq \beta^* - 1\}$ is a bounded set, so $\{x \in \N^n \mid (\alpha^*)^\transp  x \geq \beta^* - 1\}$ is a finite set. 
Since $\gamma_i (\alpha^i, \beta_i) \rightarrow (\alpha^*, \beta^*)$, we know there exists $n_0$ such that when $i \geq n_0, \{x \in \N^n \mid (\gamma_i \alpha^i)^\transp x = \gamma_i \beta_i\} \subseteq \{x \in \N^n \mid (\alpha^*)^\transp x \geq \beta^* - 1\}$. By the pigeonhole principle, we know there must exist $(\alpha^a, \beta_a)$ and $(\alpha^b, \beta_b)$ with $\{x \in \N^n \mid  (\alpha^a)^\transp x =  \beta_a\} = \{x \in \N^n \mid  (\alpha^b)^\transp x =  \beta_b\}$. Since $(\alpha^a)^\transp x \leq \beta_a$ is facet-defining inequality of $\conv(C_a \cap \Z^n)$ whose extreme points are all integral, and hyperplane $(\alpha^a)^\transp x = \beta_a$ does not pass through the origin ($\beta_a < 0$), therefore, we can find $n$ linearly independent integer points in $\{x \in \N^n \mid  (\alpha^a)^\transp x =  \beta_a\}$, say $v^1, \ldots, v^n$. consider the linear system
\begin{equation}
\label{eq: linear_system}
\left(\begin{array}{cc}(v^1)^\transp, & -1 \\ \vdots & \vdots \\(v^n)^\transp, & -1\end{array}\right) \cdot x = \left(\begin{array}{c}0 \\ \vdots \\0\end{array}\right).
\end{equation}
Since $v^1, \ldots, v^n$ are linearly independent, we know that the solution set of \eqref{eq: linear_system} forms a 1-dimensional linear space. However, both $(\alpha^a, \beta_a)^\transp$ and $(\alpha^b, \beta_b)^\transp$ are solutions of \eqref{eq: linear_system}, together with the condition that $\|(\alpha^a, \beta_a)\| = \|(\alpha^b, \beta_b)\| = 1$ and $(\alpha^a, \beta_a), (\alpha^b, \beta_b) \in \R^{n+1}_-$, we obtain that $(\alpha^a, \beta_a) = (\alpha^b, \beta_b)$, which contradicts to the initial assumption that all $(\alpha^i, \beta_i)$ are different.

\smallskip \noindent
\textbf{Case 2}: $\supp(\alpha^*) \subset [n]$.
W.l.o.g. we assume $\supp(\alpha^*) = [t]$ for $t < n$.
    By Lemma~\ref{lem: projection}, there exists $Q' \subseteq \R^t_+$ such that $\proj_{[t]} \mathcal{A}_k(Q) = \mathcal{A}_k(Q')$. 
    Since $\alpha^*_{[t]} x \leq \beta^*$ is valid to $\proj_{[t]} \mathcal{A}_k(Q)$, it will also be valid to $\mathcal{A}_k(Q')$. 
    For $Q' \subseteq \R^t_+$, we denote its corresponding family of $k$-aggregation cuts as in \eqref{eq: Omega_k_aggregation_covering} to be $\Omega_{Q'}$. Then $\mathcal{A}_k(Q') = \I(\Omega_{Q'})$, and by Proposition~\ref{prop: valid_ineq_for_closure}, we know $(\alpha^*_{[t]}, \beta^*) \in \cl \cone(\Omega_{Q'})$. Now we argue that $(\alpha^*_{[t]}, \beta^*)$ is actually an extreme ray of $\cl \cone(\Omega_{Q'})$. Assume that $(\alpha^*_{[t]}, \beta^*)$ can be written as the conical combination of some different rays $(\alpha'^j, \beta'_j) \in \cl \cone(\Omega_{Q'})$ for $j \in J$, then each $(\alpha'^j)^\transp x \leq \beta'_j$ is valid to $\mathcal{A}_k(Q')$, which yields that each $(\alpha'^j, 0, \ldots, 0) x \leq \beta'_j$ is valid to $\mathcal{A}_k(Q)$ since $\proj_{[t]} \mathcal{A}_k(Q) = \mathcal{A}_k(Q')$. Hence by Proposition~\ref{prop: valid_ineq_for_closure}, $(\alpha'^j, 0, \ldots, 0, \beta'_j) \in \cl \cone(\Omega_Q)$, for all $j \in J.$ Therefore, the extreme ray $(\alpha^*, \beta^*) = (\alpha^*_{[t]}, 0, \ldots, 0, \beta^*)$ can be written as the conical combination of $(\alpha'^j, 0, \ldots, 0, \beta'_j) \in \cl \cone(\Omega_Q), j \in J$, which gives the contradiction. 
So $(\alpha^*_{[t]}, \beta^*)$ is an extreme ray of $\cl \cone(\Omega_{Q'})$.
By inductive hypothesis and $Q'$ is in dimension $\R^t$, we know $(\alpha^*_{[t]}, \beta^*) \in \Omega_{Q'}$. Say $(\alpha^*_{[t]})^\transp x \leq \beta^*$ is a facet-defining inequality of some $\conv(C' \cap \Z^t)$ for $C' \in \mathcal{C}(Q')$. By Lemma~\ref{lem: projection}, we know there exists $C \in \mathcal{C}^k(Q)$, such that $\proj_{[t]} \conv(C \cap \Z^n) = \conv(C' \cap \Z^t)$. Hence $(\alpha^*)^\transp x = (\alpha^*_{[t]}, 0,\ldots, 0) x \leq \beta^*$ is valid to $\conv(C \cap \Z^n)$. Since $(\alpha^*, \beta^*)$ is assumed to be an extreme ray of $\cl \cone(\Omega_Q)$, by Proposition~\ref{prop: valid_ineq_for_closure}, $(\alpha^*)^\transp x \leq \beta^*$ has to be a facet-defining inequality of $\conv(C \cap \Z^n)$, that is $(\alpha^*, \beta^*) \in \F_{C} \subseteq \Omega_Q$, contradicting the foremost assumption of $(\alpha^*, \beta^*) \notin \Omega_Q$. 

Therefore, when the dimension is $N+1$, the statement of this proposition also holds. By induction we conclude the proof.
\end{proof}

\subsection{Proof of Proposition~\ref{prop: second_step}}

To prove Proposition~\ref{prop: second_step}, we will need the following lemma which can be seen as a obvious consequence from Theorem~1 in \cite{MR4207341}, and as a generalization of Dickson's Lemma~\ref{lem: Dickson_lemma}. For the completeness of this paper, we also include its proof here.
\begin{lemma}
\label{lem: wqo_easy}
Given an infinite set $\mathcal{S}$ whose elements are subsets in $\N^n$. Then there must exist $S_1, S_2 \in \mathcal S,$ such that 
$$\{x \in \N^n \mid \exists s_1 \in S_1 \ \st \ x \leq s_1\} \subseteq \{x \in \N^n \mid \exists s_2 \in S_2 \ \st \ x \leq s_2\}.$$
\end{lemma}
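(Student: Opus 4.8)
For $S\subseteq\N^n$ let $\downarrow S:=\{x\in\N^n\mid\exists s\in S,\ x\le s\}$ denote its downward closure, so that the conclusion of the lemma is exactly that $\downarrow S_1\subseteq\downarrow S_2$ for some $S_1,S_2\in\mathcal S$. Note $\downarrow S$ is \emph{downward-closed} (i.e.\ $x\le y\in\downarrow S$ implies $x\in\downarrow S$), and every downward-closed $D$ satisfies $D=\downarrow D$. The first, purely bookkeeping, step is therefore to reduce to the following reformulation: \emph{any infinite family of downward-closed subsets of $\N^n$ contains two members comparable under inclusion.} Indeed, if $\{\downarrow S\mid S\in\mathcal S\}$ is finite then two distinct members of $\mathcal S$ share a downward closure and we are done; otherwise we have infinitely many pairwise distinct downward-closed subsets of $\N^n$, and it suffices to show this family is not an antichain.

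The quickest way to finish from here is to invoke Theorem~1 of \cite{MR4207341}, which states precisely that the downward-closed subsets of $\N^n$ are well-quasi-ordered by inclusion; two comparable members then exist immediately. This is also the sense in which the lemma ``generalizes'' Dickson's lemma: Dickson's lemma (Lemma~\ref{lem: Dickson_lemma}) is the dual fact that every \emph{upward}-closed subset of $\N^n$ is finitely generated. I would present this as the main argument, and then, for completeness, give the following self-contained proof of the downward-closed case.

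The plan for the self-contained argument is: (i) Using Dickson's lemma on the complement, show that every downward-closed $D\subseteq\N^n$ is a finite union of boxes $B(c):=\{x\in\N^n\mid x\le c\}$ with $c\in(\N\cup\{\infty\})^n$ --- concretely, $\N^n\setminus D$ is upward-closed hence finitely generated by some $g^1,\dots,g^r$, and de Morgan plus distributivity turns $D=\bigcap_j\{x\mid x\not\ge g^j\}$ into such a finite union; retaining only the inclusion-maximal boxes yields a finite antichain $\Gamma_D\subseteq(\N\cup\{\infty\})^n$ of ``corners''. (ii) Verify that for downward-closed $D,D'$ one has $D\subseteq D'$ iff every $c\in\Gamma_D$ is coordinatewise dominated by some $c'\in\Gamma_{D'}$; the nontrivial direction, for corners with $\infty$-entries, needs a short limiting argument along finite truncations of $c$. (iii) Since $\N\cup\{\infty\}$ is well-ordered, $(\N\cup\{\infty\})^n$ is a well-quasi-order, so by Higman's lemma the finite sequences over $(\N\cup\{\infty\})^n$ are well-quasi-ordered under subsequence embedding; listing each $\Gamma_{D_i}$ as a finite sequence and applying this yields indices $i\neq j$ such that every entry of $\Gamma_{D_i}$ lies below some entry of $\Gamma_{D_j}$, whence $D_i\subseteq D_j$ by (ii).

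The one genuinely non-elementary ingredient is the well-quasi-ordering input in step (iii) (equivalently, Theorem~1 of \cite{MR4207341}). Dickson's lemma by itself does not suffice: the corner antichains $\Gamma_{D_i}$ can have unbounded cardinality, so they cannot be packaged as points of a fixed $\N^N$ to which the finite-product form of Dickson would apply --- one really needs a well-quasi-ordering theorem for finite sequences (or finite subsets) over a well-quasi-order, and it is precisely the restriction to downward-closed sets that rules out the Rado-type counterexamples to such statements for arbitrary set systems. By contrast, steps (i)--(ii), the limiting argument inside (ii), and the bookkeeping in the first paragraph are all routine.
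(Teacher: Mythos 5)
Your main argument is essentially the paper's: both invoke Theorem~1 of \cite{MR4207341} to conclude that the downward-closed (integer packing) subsets of $\N^n$ are well-quasi-ordered by inclusion, hence have no infinite antichain. In fact your bookkeeping step is slightly more careful than the paper's: the paper asserts that $\{\downarrow S \mid S \in \mathcal S\}$ is ``clearly an infinite set'' of integer packing sets, but this need not hold, since distinct $S_1, S_2 \in \mathcal S$ can share a downward closure; your explicit dichotomy (if finitely many distinct downward closures, two members of $\mathcal S$ coincide on $\downarrow$ and we are done; otherwise apply the wqo theorem to infinitely many pairwise distinct packing sets) closes that small gap cleanly.

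The self-contained alternative you sketch is also sound, and is a genuinely different route in the sense that it bypasses \cite{MR4207341} by decomposing each downward-closed set into a finite antichain of generalized box corners in $(\N \cup \{\infty\})^n$ (via Dickson's lemma on the upward-closed complement), characterizing inclusion by corner domination with the pigeonhole/truncation argument for $\infty$-coordinates, and then appealing to Higman's lemma on finite sequences over the wqo $(\N \cup \{\infty\})^n$. Your closing caveat is exactly right and worth keeping: Dickson's lemma alone does not suffice because $|\Gamma_{D_i}|$ is unbounded, so one cannot work in a fixed $\N^N$; and the restriction to downward-closed sets is what makes the set-system wqo statement true at all. Since Higman's lemma is precisely the wqo input underlying Theorem~1 of \cite{MR4207341}, the two routes have the same logical strength; the alternative just makes that dependence explicit rather than outsourcing it to a citation.
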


\begin{proof}
Theorem~1 in \cite{MR4207341} states that: The set of integer packing sets in $\R^n$ is well-quasi-ordered by the relation $\subseteq$. Here an integer packing set in $\R^n$ is a subset $X \in \N^n$ with the property that: if $x \in X, y \in \N^n$ and $y \leq x$, then $y \in X$. Then $\{\{x \in \N^n \mid \exists s \in S \ \st \ x \leq s\} \ \forall S \in \mathcal S\}$ is clearly an infinite set of integer packing sets, which is well-quasi-ordered by the relation $\subseteq$. In order theory, one necessary condition for an infinite poset $(X, \prec)$ to be a well-quasi-ordered set is that, $\exists x_1, x_2 \in X, \st \ x_1 \prec x_2$. Hence we finish the proof.
\end{proof}

Now we present the main proof of this subsection.

\begin{proof}[Proof of Proposition~\ref{prop: second_step}]
Assuming for contradiction that $\{(\alpha^i, \beta_i)\}_{i \geq 1}$ is a sequence of infinitely many different extreme rays of $\cone(\Omega_Q)$.
Here w.l.o.g. we assume that $\|(\alpha^i, \beta_i)\| = 1$. Since $(\alpha^i, \beta_i)$ is extreme ray of $\cone(\Omega_Q)$ for each $i \geq 1$, and $\Omega_Q \subseteq \{x \in \R^{n+1} \mid \|x\| = 1\}$, we further know that $(\alpha^i, \beta_i) \in \Omega_Q$. 
Assuming each $(\alpha^i)^\transp x \leq \beta_i$ is facet-defining inequality of $\conv(C_i \cap \Z^n)$ for some $C_i \in \mathcal{C}^k(Q)$, where $\beta_i < 0, \alpha^i \in \R^n_-$. 
Within this infinite sequence $\{(\alpha^i, \beta_i)\}_{i \geq 1}$, by the pigeonhole principle, there exist infinitely many $\alpha^i$  with the same support $\supp(\alpha^i)$. W.l.o.g. we assume for all $i \in \N, \supp(\alpha^i) = [t]$ for some $t \leq n$. Next, we argue that there must exist $i^*, j^* \in \N$, such that $(\alpha^{i^*})^\transp x \leq \beta_{i^*}$ is not facet-defining to $\conv(C_{i^*} \cap \Z^n) \cap \conv(C_{j^*} \cap \Z^n)$.

For each $i \in \N$, denote by $E_i$ the set of extreme points of the facet $F_i: = \conv(C_i \cap \Z^n) \cap \{x \in \R^n \mid (\alpha^i)^\transp x = \beta_i\}$. Obviously the extreme rays each $F_i$ are $e^{t+1}, \ldots, e^n$, so $F_i = \conv(E_i) + \{(0,\ldots, 0)\} \times \R^{n-t}_+$. Let $\{E_i \mid  i \in \N\}$ be the infinite set $\mathcal S$ in Lemma~\ref{lem: wqo_easy}, and from such lemma, we know there exists $i^*$ and $j^*$, such that $\{x \in \N^n \mid \exists y \in E_{i^*} \ \st \ x \leq y\} \subseteq \{x \in \N^n \mid \exists y \in E_{j^*} \ \st \ x \leq y\}$. 
To help visualize, see Figure~\ref{fig: last_fig}.
Specifically, for any $x \in E_{i^*}$, there exists $y \in E_{j^*}$ such that $x \leq y$. Note that:
\begin{align}
\label{eq: prop3_proof}
\begin{split}
& \{x \in \R^n \mid (\alpha^{i^*})^\transp x = \beta_{i^*}\} \bigcap \big(\conv(C_{i^*} \cap \Z^n) \cap \conv(C_{j^*} \cap \Z^n)\big) \\
= & \big(\{x \in \R^n \mid (\alpha^{i^*})^\transp x = \beta_{i^*}\} \cap \conv(C_{i^*} \cap \Z^n)\big) \bigcap \conv(C_{j^*} \cap \Z^n)\\
= & F_{i^*} \cap \conv(C_{j^*} \cap \Z^n) \\
\subseteq & F_{i^*} \cap \{x \in \R^n \mid (\alpha^{j^*})^\transp x \leq \beta_{j^*}\}.
\end{split}
\end{align}

Here $F_{i^*} = \conv(E_{i^*}) + \{(0,\ldots, 0)\} \times \R_+^{n-t}$. For any $x^{i^*} \in E_{i^*}$, since there exists $x^{j^*} \in E_{j^*}$ such that $x^{i^*} \leq x^{j^*}$, and here $(\alpha^{j^*})^\transp x^{j^*} = \beta_{j^*}, \alpha^{j^*} \in \R^n_-$, we know that: $(\alpha^{j^*})^\transp x^{i^*} \geq (\alpha^{j^*})^\transp x^{j^*} = \beta_{j^*}$. Moreover, since $\supp(\alpha^{j^*}) = [t]$, so for any $x \in \{(0,\ldots, 0)\} \times \R_+^{n-t}, (\alpha^{j^*})^\transp x = 0$. Therefore:
\begin{align*}
\begin{split}
F_{i^*} \cap \{x \in \R^n \mid (\alpha^{j^*})^\transp x \leq \beta_{j^*}\} & = F_{i^*} \cap \{x \in \R^n \mid (\alpha^{j^*})^\transp x = \beta_{j^*}\} \\ & \subseteq \{x \in \R^n \mid (\alpha^{j^*})^\transp x = \beta_{j^*}, (\alpha^{i^*})^\transp x = \beta_{i^*}\}.
\end{split}
\end{align*}
Combined with \eqref{eq: prop3_proof}, we know $\{x \in \R^n \mid (\alpha^{i^*})^\transp x = \beta_{i^*}\} \bigcap \big(\conv(C_{i^*} \cap \Z^n) \cap \conv(C_{j^*} \cap \Z^n)\big)$ has dimension at most $n-2$, which means that $(\alpha^{i^*})^\transp x \leq \beta_{i^*}$ cannot be a facet-defining inequality for $\conv(C_{i^*} \cap \Z^n) \cap \conv(C_{j^*} \cap \Z^n)$.

Since $(\alpha^{i^*})^\transp x \leq \beta_{i^*}$ is valid for $\conv(C_{i^*} \cap \Z^n) \cap \conv(C_{j^*} \cap \Z^n)$, we know there must exist different valid inequalities $(u^i)^\transp x \leq v_i, i \in [\ell], \ell \in \N$, for $\conv(C_{i^*} \cap \Z^n) \cap \conv(C_{j^*} \cap \Z^n)$, such that $(\alpha^{i^*}, \beta_{i^*})$ can be written as the conical combination of $(u^1, v_1), \ldots, (u^\ell, v_{\ell})$. Note that for each $i \in [\ell], (u^i)^\transp x \leq v_i$ is also valid for $\I(\Omega_Q)$, by Proposition~\ref{prop: valid_ineq_for_closure}, $(u^i, v_i) \in \cl \cone(\Omega_Q) \ \forall i \in [\ell]$, which is the same set as $\cone(\Omega_Q)$ due to Proposition~\ref{prop: first_step}. Therefore, $(\alpha^{i^*}, \beta_{i^*})$ can be written as the conical combination of finitely many different rays in $\cone(\Omega_Q)$, which contradicts the original assumption that $(\alpha^{i^*}, \beta_{i^*})$ is an extreme ray of $\cone(\Omega_Q)$. 
\end{proof}

\begin{figure}
    \centering
\begin{tikzpicture}[scale=1.0]
\draw [<->,thick] (0,5) node (yaxis) [above] {$x_2$}
        |- (5,0) node (xaxis) [right] {$x_1$};
    \draw[thick, blue] (0.3,1.5) -- (1.5,0.2);
    \draw[thick, blue] (0.6,2.0) -- (2.0,0.6);
    \draw[thick, red] (0.3, 1.5) -- (-0.9, 2.8);
    \draw[thick, red] (1.5,0.2) -- (2.7,-1.1) node [right] {$\alpha^{i^*} x = \beta_{i^*}$};
    \draw[thick, red] (0.6,2.0) -- (-0.5, 3.1);
    \draw[thick, red] (2.0,0.6) -- (3.1, -0.5) node [right] {$\alpha^{j^*} x = \beta_{j^*}$};
    \draw (0.3, 1.5) -- (0, 2.5);
    \draw (1.5,0.2) -- (2.4, 0);
    \draw (0.6, 2.0) -- (0, 3.0);
        \draw (2.0, 0.6) -- (3.5,0);
     \fill[red] (0.3,1.5) circle (1pt);
      \fill[red] (1.5,0.2) circle (1pt);
      \fill[red] (0.6,2.0) circle (1pt);
      \fill[red] (2.0,0.6) circle (1pt);
      \fill[yellow, opacity = 0.2] (0,5) -- (0, 3.0) -- (0.6, 2.0) -- (2.0, 0.6) -- (3.5,0) -- (5, 0) -- (5,5);
      \fill[orange, opacity = 0.2] (0,5) -- (0, 2.5) -- (0.3, 1.5) -- (1.5,0.2) -- (2.4, 0) -- (5, 0) -- (5,5);
      \draw[blue] (1.3,1.3) node [above] {$F_{j^*}$};
      \draw[blue] (0.9, 0.9) node [above] {$F_{i^*}$};
            \draw[red] (2.5,2.1) node [above] {$\conv(C_{j^*} \cap \Z^n)$};
\end{tikzpicture}
    \caption{Here $\{x \in \N^n \mid \exists y \in E_{i^*} \ \st \ x \leq y\} \subseteq \{x \in \N^n \mid \exists y \in E_{j^*} \ \st \ x \leq y\}$,
    where $E_{i^*}$ (and $E_{j^*})$ is the set of two extreme points of facet $F_{i^*}$ (and $F_{j^*})$.}
    \label{fig: last_fig}
\end{figure}

\section{Conclusion}

In this paper, we propose a novel way of showing the polyhedrality of general cutting-plane closure, by characterizing the extreme rays of one specific pointed closed convex cone. Using this proof technique we close the remaining open problem left in Bodur et~al. \cite{bodur2017aggregation}, showing that the $k$-aggregation closure of a non-polyhedral covering set is still a polyhedron.
Due to the lack of specificity for our proof technique, this is not a one-size-fits-all approach, and in most cases further argument is inevitable. 
That being said, we do believe that such approach can help us tackle more polyhedrality problems from a different angle, and further investigation might be of independent interest.

\bibliographystyle{plain}
\bibliography{cite}


\end{document}